\documentclass[12pt,reqno]{amsart}
\usepackage{amsmath,amssymb,amsfonts,amscd,latexsym,amsthm,mathrsfs,verbatim,comment}
\usepackage{enumerate}
\usepackage{tikz}
\usepackage[unicode]{hyperref}
\newtheorem{theorem}{Theorem}

\theoremstyle{remark}
\newtheorem{remark}{\bf Remark}
\newtheorem*{acknowledgements}{\bf Acknowledgements}
\renewcommand{\d}{{\mathrm d}}

\newcommand{\Z}{\mathbb{Z}}

\begin{document}

\title{An integrality phenomenon}

\author{Florian F\"urnsinn}
\address{University of Vienna, Faculty of Mathematics, Vienna, Austria}
\email{florian.fuernsinn@univie.ac.at}

\author{Danylo Radchenko}
\address{Institut des Hautes \'Etudes Scientifiques, CNRS, Laboratoire Alexandre Grothendieck, 35 route de Chartres, Bures-sur-Yvette 91440, France}
\email{danradchenko@gmail.com}

\author{Wadim Zudilin}
\address{Department of Mathematics, IMAPP, Radboud University, PO Box 9010, 6500 GL Nij\-me\-gen, Netherlands}
\email{wzudilin@gmail.com}

\date{19 April 2026}

\subjclass[2020]{Primary 11B83; Secondary 33C70, 33E30}

\begin{abstract}
We prove a general statement about the integrality of the sequences generated by a recursion of the following form: $n u_n$ equals a linear combination of $u_{n-1},u_{n-2},\dots,u_0$ with polynomial coefficients in $n$ of special form.
This includes a conjectural integrality of the sequence related to the H\"ormander--Bernhardsson extremal function, for which we further give a direct proof as well.
\end{abstract}

\maketitle

The Ap\'ery numbers $a_0,a_1,a_2,\dots$ appear as the denominator sequence of Ap\'ery's approximations to $\zeta(3)$ \cite{Ap79,Po79}; they can be generated via the recursion
\[
n^3a_n=(2n-1)(17n^2-17n+5)a_{n-1}-(n-1)^3a_{n-2} \quad\text{for}\; n>0,
\quad a_0=1, \; a_{-1}=0.
\]
The fact that all elements in the sequence are integers is quite remarkable, since one needs to divide at each step by $n^3$ to generate the consecutive term $a_n$, hence $a_n$ is likely to be rational with denominator `close' to~$n!^3$.
One way to resolve the integrality mystery is to show that $a_n=\sum_{k=0}^n\binom{n+k}{k}^2\binom{n}{k}^2$; for a discussion of other strategies to do this see \cite{ASZ11,Za09} and references therein.
The integrality phenomenon is not common for recursions like Ap\'ery's (see~\cite{Za09}); the literature mainly discusses the cases when the resulting generating function $\sum_{n=0}^\infty a_nx^n$ satisfies a linear differential equation of Picard--Fuchs type, that is, a differential equation describing the variation of periods of a family of algebraic varieties defined over a number field. 
In this note we give a family of recursions of a very different type that exhibit the same integrality phenomenon.

\section{An irregular example of integrality}
\label{s1}
Investigation of the H\"ormander--Bernhardsson extremal function in \cite{BORS25} brought to light a different type of example that originates from a differential equation with irregular singularities,
\begin{equation}
x^4\,\frac{\d^2y}{\d x^2}+2x^3\,\frac{\d y}{\d x}
-\big(\alpha^2x^4+(\beta^2-\tfrac14)x^2+\gamma^2\big)y=0,
\label{D2a}
\end{equation}
where $\alpha,\beta,\gamma$ are parameters.
Equation \eqref{D2a} possesses analytic solutions at neither $x=0$ nor $x=\infty$.
The substitution $x=2\gamma t$ brings it to the form
\begin{equation}
t^4\,\frac{\d^2y}{\d t^2}+2t^3\,\frac{\d y}{\d t}
-\big(4(\alpha\gamma)^2t^4-(\tfrac12+\beta)(\tfrac12-\beta)t^2+\tfrac14\big)y=0;
\label{D2b}
\end{equation}
the latter has formal solutions $e^{-1/(2t)}g(t)$ and (because of the symmetry $t\mapsto-t$ of the equation) $e^{1/(2t)}g(-t)$, with $g(t)=g(t;\alpha,\beta,\gamma)$ a power series at $t=0$ satisfying
\begin{equation}
t^2\,\frac{\d^2g}{\d t^2}+(1+2t)\frac{\d g}{\d t}
-\big(4(\alpha\gamma)^2t^2-(\tfrac12+\beta)(\tfrac12-\beta)\big)g=0.
\label{D2c}
\end{equation}
If we write $g(t)=\sum_{n=0}^\infty w_nt^n$, then the differential equation \eqref{D2c} translates into the recursion
\begin{equation}
\begin{gathered}
(n+1)w_{n+1}+(n+\tfrac12+\beta)(n+\tfrac12-\beta)w_n-4(\alpha\gamma)^2w_{n-2}=0
\quad\text{for}\; n=0,1,2,\dots,
\\ w_0=1, \;\; w_{-1}=w_{-2}=0,
\end{gathered}
\label{w-rec}
\end{equation}
for the coefficients; $g(0)=w_0=1$ fixes the normalisation of $g(t)$.

The symmetric square of the differential equation \eqref{D2b} reads
\begin{equation}
t^4\,\frac{\d^3Y}{\d t^3}+6t^3\,\frac{\d^2Y}{\d t^2}
-\big(16(\alpha\gamma)^2t^4-(7-4\beta^2)t^2+1\big)\frac{\d Y}{\d t}
-4t\big(8(\alpha\gamma)^2t^2-(\tfrac12+\beta)(\tfrac12-\beta)\big)Y=0.
\label{D3}
\end{equation}
Its solution space is spanned by $\big(e^{-1/(2t)}g(t)\big)^2$, $\big(e^{1/(2t)}g(-t)\big)^2$ and $e^{-1/(2t)}g(t)\cdot e^{1/(2t)}g(-t)$; while in general all three solutions are formal, the latter $G(t)=g(t)g(-t)$ is an even power series at the origin:
\[
G(t)=\sum_{n=0}^\infty u_nt^{2n},
\]
where $u_n=\sum_{k=0}^{2n}(-1)^kw_kw_{2n-k}$ for $n=0,1,2,\dots$\,.
Furthermore, the differential equation \eqref{D3} translates into the recursion
\begin{equation}
\begin{gathered}
(n+1)u_{n+1}-2(2n+1)(n+\tfrac12+\beta)(n+\tfrac12-\beta)u_n+16(\alpha\gamma)^2nu_{n-1}=0
\quad\text{for}\; n=0,1,2,\dots,
\\ u_0=1, \;\; u_{-1}=0,
\end{gathered}
\label{u-rec}
\end{equation}
for the coefficients of $G(t)$.

We can further restate the equations in \eqref{D2b}--\eqref{u-rec} in terms of just two parameters $c=4(\alpha\gamma)^2$ and $b=\beta^2-\frac14$;
for example, the recursions \eqref{w-rec} and \eqref{u-rec} assume the forms
\begin{equation}
nw_{n}+(n(n-1)-b)w_{n-1}-cw_{n-3}=0
\quad\text{for}\; n=0,1,2,\dots
\label{w-rec1}
\end{equation}
and
\begin{equation}
nu_{n}-2(2n-1)(n(n-1)-b)u_{n-1}+4c(n-1)u_{n-2}=0
\quad\text{for}\; n=0,1,2,\dots\,.
\label{u-rec1}
\end{equation}
It has been numerically observed in \cite[Conjecture 2]{BORS25} that the terms $u_n=u_n(b,c)$ generated by the latter recurrence equation starting from $u_0=1$ (and $u_{-1}=0$) are polynomials in $b,c$ with \emph{integer} coefficients; note that \eqref{u-rec1} as in Ap\'ery's case suggests only $n!u_n\in\mathbb Z[b,c]$.
In contrast, the coefficients of polynomials $w_n=w_n(b,c)$ generated by \eqref{w-rec1} are very far from being integral (or 2-integral): from reading off the coefficient of $b^n$ in $w_n$ one finds out immediately that the inclusion $n! w_n\in\mathbb Z[b,c]$ is the best possible.

\begin{theorem}
\label{ex-conj}
The sequence $u_n$ generated by the recursion \eqref{u-rec1} and initial data $u_0=1$, $u_{-1}=\nobreak0$, is integer-valued: $u_n\in\mathbb Z[b,c]$.
\end{theorem}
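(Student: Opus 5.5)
The plan is to prove that $u_n$ is $p$-integral for every prime $p$. The route is a Lucas-type congruence, proved by induction on the base-$p$ expansion of $n$. A useful guide is the specialisation $c=0$. There \eqref{u-rec1} is solved in closed form:
\[
u_n(b,0)=\binom{2n}{n}\prod_{j=1}^{n}\big(j(j-1)-b\big),
\]
which is visibly in $\mathbb Z[b]$. Here the factor $\binom{2n}{n}$ absorbs the division by $n$ through $\binom{2n}{n}/\binom{2n-2}{n-1}=2(2n-1)/n$. Both factors satisfy Lucas-type congruences. The binomial obeys Lucas' theorem. The product is periodic modulo $p$ in the index $j$, and a full period $\prod_{j=0}^{p-1}(j(j-1)-b)$ is, modulo $p$, a fixed polynomial $\Phi_p(b)$.

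The first step is to expand in $c$. Write
\[
u_n=\sum_{k\ge0}(-4c)^k v_{n,k}(b),
\]
so that
\[
n v_{n,k}=2(2n-1)\big(n(n-1)-b\big)v_{n-1,k}+(n-1)v_{n-2,k-1}.
\]
Normalising by $\binom{2n}{n}P_n$, where $P_n=\prod_{j\le n}(j(j-1)-b)$, turns this into a telescoping relation. It yields $v_{n,k}$ as a $k$-fold nested sum of products $P_n/P_{m}$, which are integral polynomials, times explicit rational binomial-type weights. This makes the only possible denominators explicit: they come from the weights alone.

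The second step, which is the core of the argument, is to show that for each prime $p$ and each $n=n_0+pm$ with $0\le n_0<p$,
\[
u_{n_0+pm}\equiv u_{n_0}\cdot \tilde u_m \pmod{p\,\mathbb Z_{(p)}[b,c]}.
\]
Here $\tilde u_m$ is an auxiliary sequence, plausibly $u_m$ evaluated at Frobenius-twisted parameters such as $(\Phi_p(b),c^p)$, which is $p$-integral by induction on $m$. With this in hand, $p$-integrality of all $u_n$ follows by induction. The base range $n<p$ is trivial, since the recursion divides only by $1,\dots,p-1$ there.

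To prove the congruence, I would treat the recursion at indices $n\equiv0\pmod p$. The left side $n u_n$ is then divisible by $p$, so one must show that the right side $2(2n-1)(n(n-1)-b)u_{n-1}-4c(n-1)u_{n-2}$ is $\equiv0\pmod p$, and in fact $\equiv0$ modulo the correct higher power of $p$ when $p^a\| n$. At $n=pm$ the right side reduces modulo $p$ to $-2(-b)u_{pm-1}+4c\,u_{pm-2}$. So the key identity is a relation between $u_{pm-1}$ and $u_{pm-2}$ modulo $p$, which I would extract from the block structure: the terms $u_{p(m-1)+r}$ for $0\le r<p$ are governed by the same recursion with $n\equiv r$. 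This is where $G=g(t)g(-t)$ may help. The analogous block relation for $w$ could be read off \eqref{w-rec1}, and the symmetry $t\mapsto -t$ used to cancel the bad denominators.

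The main obstacle is the higher prime powers, that is, showing divisibility by $p^a$ rather than just $p$ when $p^a\| n$. A single Lucas congruence modulo $p$ does not suffice for this. What I would try first is to strengthen the statement to a Dwork-type family of congruences, $u_{n_0+pm}/\tilde u_m\equiv u_{n_0}$ modulo $p^{s}$ along $p$-adic blocks. If that becomes unwieldy, I would fall back on the nested-sum formula of the first step and bound the $p$-adic valuation of the rational weights directly, in the manner of Kummer's theorem for $\binom{2n}{n}$.
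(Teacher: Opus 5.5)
Your proposal is a strategy rather than a proof, because the step that carries all the content is never supplied.

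\textbf{The Lucas-type congruence is not established.} The congruence $u_{n_0+pm}\equiv u_{n_0}\tilde u_m$ is neither proved nor even fully formulated, since $\tilde u_m$ is never identified.

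The suggested guess already fails at $c=0$. There the congruence forces $\tilde u_m\equiv\binom{2m}{m}\Phi_p(b)^m$. For odd $p$ this is not $u_m(B,0)=\binom{2m}{m}\prod_{j\le m}(j(j-1)-B)$ for any $B$ (compare small $m$). So $\tilde u$ would have to satisfy some other recursion, and its integrality would be a further claim.

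More seriously, even the first nontrivial case is open. Feed the putative congruence into the recursion at $n=pm$. The needed condition $2b\,u_{pm-1}+4c\,u_{pm-2}\equiv0\pmod p$ then reduces to $2b\,u_{p-1}+4c\,u_{p-2}\equiv0\pmod p$. That is exactly the $p$-integrality of $u_p$, i.e.\ the theorem itself at $n=p$, and ``extract it from the block structure'' is not an argument for it. Also, a congruence modulo $p\,\mathbb Z_{(p)}[b,c]$ presupposes integrality, so congruence and integrality would need one joint induction, which you have not set up.

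Finally, as you acknowledge, at $n=p^am$ you need divisibility by $p^a$. No mod-$p$ statement provides that, and the Dwork-type strengthening is only named.

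\textbf{The fallback fails as well.} The terms of your nested sum are not individually $p$-integral; integrality comes from cancellation between them. Already $v_{3,1}=\tfrac53(6-b)-\tfrac43b=10-3b$, where neither summand is $3$-integral. So no termwise Kummer-type valuation bound can work.

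\textbf{The missing idea.} You need a re-expansion in which every term is visibly integral. The paper gets it from the auxiliary sequence $w_n$. By \eqref{w-rec1}, $n!\,w_n\in\mathbb Z[b,c]$ is immediate, since multiplying by $(n-1)!$ absorbs the factor $n$. The identity \eqref{id3}, checked by showing both sides satisfy the same linear differential equation, then yields
\[
u_n=(-1)^n\sum_{k=0}^{\lfloor n/2\rfloor}(-1)^kc^k\,(n-2k)!\,w_{n-2k}\binom{n-k}{k}\binom{2n-2k}{n-k}.
\]
This handles all primes and all prime powers at once.

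Your remark about $G=g(t)g(-t)$ points in this direction. However, the naive product formula $u_n=\sum_k(-1)^kw_kw_{2n-k}$ carries denominators $k!\,(2n-k)!$. The point is to rewrite $G$ in terms of the integral sequence $n!\,w_n$ with binomial weights.
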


Before proceeding with the proof, consider the special choice $c=0$ in which the recurrence equations \eqref{w-rec1} and \eqref{u-rec1} reduce to two-term recursions; those can be then solved explicitly and we arrive at the equality
\begin{equation}
\sum_{n=0}^\infty(\tfrac12+\beta)_n(\tfrac12-\beta)_n\frac{t^n}{n!}
\cdot\sum_{n=0}^\infty(\tfrac12+\beta)_n(\tfrac12-\beta)_n\frac{(-1)^nt^n}{n!}
=\sum_{n=0}^\infty(\tfrac12+\beta)_n(\tfrac12-\beta)_n\binom{2n}{n}t^{2n},
\label{eq:HG}
\end{equation}
where the Pochhammer notation $(\alpha)_n=\Gamma(\alpha+n)/\Gamma(\alpha)=\prod_{i=0}^{n-1}(\alpha+i)$ is used.
The identity can be stated and proved hypergeometrically; it is related to (a special case of) the classical Clausen's identity
\[
\bigg(\sum_{n=0}^\infty\frac{(\tfrac12+\beta)_n(\tfrac12-\beta)_n}{n!^2}t^n\bigg)^2
=\sum_{n=0}^\infty\frac{(\tfrac12+\beta)_n(\tfrac12-\beta)_n}{n!^2}\binom{2n}{n}\big(t(1-t)\big)^n,
\]
and for the latter several non-hypergeometric deformations are known \cite{ASZ11,CTYZ11}.

\begin{proof}[Proof of \autoref{ex-conj}]
Our deformation of identity \eqref{eq:HG} reads
\begin{equation}
\sum_{n=0}^\infty u_nt^{2n}
=\sum_{k=0}^\infty(-1)^kc^kt^{4k}\sum_{n=0}^\infty n!w_n\binom{n+k}{k}\binom{2n+2k}{n+k}(-1)^nt^{2n}.
\label{id3}
\end{equation}
Once written, verification that both sides satisfy the same linear differential equation is straightforward.
Identity \eqref{id3} leads to
\begin{equation}
u_n
=(-1)^n\sum_{k=0}^{\lfloor n/2\rfloor}(-1)^kc^k(n-2k)!w_{n-2k}\binom{n-k}k\binom{2n-2k}{n-k}
\quad\text{for}\; n=0,1,2,\dots\,,
\label{bin}
\end{equation}
from which the integrality of $u_n$ is transparent in view of $n!w_n\in\mathbb Z[b,c]$.
\end{proof}

\begin{remark}
\label{R1}
One can invert formula \eqref{bin}:
\begin{equation}
n!\,w_n
=\sum_{m=0}^n\binom{n}{m}{\binom{2m}m}^{-1}
\sum_{k=0}^m(-1)^{n+m+k}\bigg(\binom{2m}{m-k}-\binom{2m}{m-k-1}\bigg)2^{m-k}c^{(n-k)/2}u_k
\label{inv}
\end{equation}
for $n=0,1,2,\dots$; \emph{a posteriori} only $k\equiv n\bmod2$ appear in the latter sum.
\end{remark}

\begin{remark}
\label{R2}
One can also cast the transformation in \eqref{id3} as
\begin{align*}
\sum_{n=0}^\infty u_nt^{2n}
=\frac1{\sqrt{1+4ct^4}}
\sum_{n=0}^\infty\binom{2n}{n}n!w_n\bigg(\frac{-t^2}{1+4ct^4}\bigg)^n.
\end{align*}
\end{remark}

\section{A general integrality statement}
\label{s2}
The following result shows that~\eqref{u-rec1} belongs to a broad family of recursions exhibiting an analogous integrality property.

\begin{theorem}
	\label{thmrec}
	Let $R$ be an integral domain of characteristic zero with fraction field $K$, and let $p_i\in tR[t^2]$, $i\ge1$, be a sequence of odd polynomials. Define $\{u_n\}_{n\ge0}\subseteq K$ by the recursion
	\begin{equation}
		nu_n = \sum_{i=1}^{n}p_{i}(n-i/2)\,u_{n-i},
		\quad\text{for}\;n=1,2,\dots,
		\label{u-new2}
	\end{equation}
	with $u_0=1$. Then $u_n\in R[\frac12]$ for all $n\ge0$.
\end{theorem}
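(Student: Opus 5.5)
The plan is to pass from the recursion to a differential equation and exploit the skew-symmetry forced by the oddness of the $p_i$, in analogy with the way $G(t)=g(t)g(-t)$ arose in Section~\ref{s1}. First, a standard reduction: all $u_n$ are polynomials in the coefficients of the $p_i$ with rational coefficients. It therefore suffices to treat the universal case, where $R=\mathbb Z[a_{i,j}]$ is the polynomial ring in the coefficients of $p_i(x)=\sum_j a_{i,j}x^{2j+1}$. By linearity in each monomial, we must then show that every coefficient of $u_n$ lies in $\mathbb Z[\frac12]$, i.e.\ is $\ell$-integral for every odd prime $\ell$.

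Next I would rewrite \eqref{u-new2} in operator form. Put $U(t)=\sum u_nt^n$ and $\theta=t\,\d/\d t$. The coefficient of $t^n$ in $t^{i/2}p_i(\theta)t^{i/2}U$ is $p_i(n-i/2)u_{n-i}$. Hence \eqref{u-new2} reads
\[
\theta U=\sum_{i\ge1}t^{i/2}p_i(\theta)\,t^{i/2}\,U=:A\,U .
\]
To avoid half-integer powers, substitute $t=s^2$. Under the formal adjoint $\theta^*=-\theta$, $t^*=t$, each summand of $A$ is anti-self-adjoint precisely because $p_i$ is odd. So $\theta-A$ is skew-adjoint, just as the symmetric-square operator \eqref{D3} arises from \eqref{D2b}. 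This is the structural input I want to use.

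The main step is to imitate \eqref{id3}. I would look for an auxiliary sequence $w_n$, defined by a first-order-in-$n$ recursion whose natural denominators are exactly $n!$, so that $n!\,w_n\in R[\frac12]$ is evident by induction. I would then aim to prove a transformation of the shape
\[
U(t)=\sum_{n}\binom{2n}{n}\,n!\,w_n\,\phi(t)^n\cdot(\text{unit power series}),
\]
analogous to Remark~\ref{R2}. The first thing I would try is a combinatorial version of this. Expand $u_n$ as a sum over compositions $n=i_1+\dots+i_r$ of $\prod_j p_{i_j}(N_j-i_j/2)/\prod_j N_j$, where $N_j=i_1+\dots+i_j$. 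Then pair each composition with its reversal: under reversal the arguments $N_j-i_j/2$ become $n-(N_j-i_j/2)$, and oddness contributes a sign $(-1)^r$. The hope is that after this symmetrisation the denominators $\prod N_j$ combine into products of central-binomial-type quotients $\binom{2m}{m}$, which are integral away from~$2$.

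I expect this denominator bookkeeping to be the main obstacle: identifying the right auxiliary object (the analogue of $g(t)$, or of the combination in \eqref{bin}) so that integrality becomes transparent. If the explicit transformation resists, the fallback is to prove $\ell$-integrality directly for each odd $\ell$. This would be done by induction on $n$ in blocks of length $\ell$, using the skew symmetry $\theta\mapsto-\theta$ to show that the reduction of $U$ modulo $\ell$ satisfies a Dwork/Cartier-type congruence $U(t)\equiv U_{<\ell}(t)\,U(t^\ell)$-style relation. That congruence would absorb the factors of $\ell$ that appear in the denominators $n$ with $\ell\mid n$.
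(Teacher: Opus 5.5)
There is a genuine gap: none of your three routes reaches a point where integrality follows, and the one concrete claim in the combinatorial route does not hold. Reversing a composition sends the midpoints $y_j=N_j-i_j/2$ to $n-y_j$. Oddness relates $p(n-y)$ to $p(y-n)$, not to $p(y)$, so the reversed term is not $\pm$ the original one. For example, with $p_1=p_2=t$ and $n=3$, the compositions $(1,2)$ and $(2,1)$ have numerators $1$ and $\tfrac52$, contributing $\tfrac13$ and $\tfrac5{12}$.

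The auxiliary-$w_n$ route copies the special structure of Section~\ref{s1}, where $U=g(t)g(-t)$ comes from a second-order equation. For general $p_i$ the operator $\theta-A$ can have any odd order, so there is no reason to expect $U$ to be such a product or to satisfy a Clausen-type transformation. The Dwork--Cartier fallback is offered without any mechanism that would produce the congruence.

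The missing idea is to actually exploit the skew-adjointness you noticed, via the Lagrange identity: multiply $\theta U=AU$ by $U$ itself. The left side becomes $U\,\theta U=\frac12\theta(U^2)$. On the right, write $p_i(t)=\sum_k a_{ik}t^{2k+1}$ and $f_i=t^{i/2}U$. The identity $f\,\theta^{2k+1}f=\frac12\theta\big(\sum_{j=0}^{2k}(-1)^j\,\theta^jf\,\theta^{2k-j}f\big)$ shows that $U\cdot AU=\sum_i f_i\,p_i(\theta)f_i$ is also an exact $\theta$-derivative. Since $\ker\theta$ on $K[[t]]$ consists of constants, and $\theta^j(t^{i/2}U)=t^{i/2}(\theta+\frac i2)^jU$, integrating gives the quadratic first integral
\[
U^2=1+\sum_{i\ge1}t^i\sum_k a_{ik}\sum_{j=0}^{2k}(-1)^j\big((\theta+\tfrac i2)^jU\big)\big((\theta+\tfrac i2)^{2k-j}U\big).
\]
The coefficient of $t^n$ on the left is $2u_n+\sum_{0<m<n}u_mu_{n-m}$. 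On the right it is a polynomial in $u_0,\dots,u_{n-1}$ with coefficients in $R[\frac12]$. Induction then gives $u_n\in R[\frac12]$: the division by $n$ has been traded for a division by $2$.

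This is precisely the paper's argument. The paper carries it out in the multivariable form of \autoref{thmpoly}, and reduces \autoref{thmrec} to that much as in your universal reduction. Without this step, or a working substitute, your proposal does not prove the statement.
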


For a $d$-tuple of integral polynomials $\mathcal{Q}=(Q_1,\dots,Q_d)\in \Z[t]^d$, we recursively define a collection of rational functions $\langle \mathcal{Q}\rangle_{m}\in \mathbb{Q}(x_1,\dots,x_d)$, where $m=(m_1,\dots,m_d)\in\Z_{\ge0}^d$, by
	\[\langle \mathcal{Q}\rangle_{0}=1 \quad\text{and}\quad
	    \langle \mathcal{Q}\rangle_{m} = \frac{\sum_{i=1}^{d}Q_i(\langle m-e_i/2,x\rangle) \langle \mathcal{Q}\rangle_{m-e_i}}{\langle m, x\rangle} \quad\text{for}\; m\in \Z_{\ge0}^d\smallsetminus \{0\},\]
extended by $\langle\mathcal{Q}\rangle_{m}=0$ if $m\notin\Z_{\ge0}^d$.
Here $\langle u,v\rangle=u_1v_1+\dots+u_dv_d$, $x=(x_1,\dots,x_d)$, and $e_1,\dots,e_d$ are the standard basis vectors in~$\Z^d$.

\autoref{thmrec} easily follows from the following.
\begin{theorem} \label{thmpoly}
	For any odd polynomials $Q_1,\dots,Q_d\in t\Z[t^2]$ the elements $\langle \mathcal{Q}\rangle_{m}$ lie in $\Z[\frac12][x]$.
\end{theorem}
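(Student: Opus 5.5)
The plan is to prove \autoref{thmpoly} through generating functions and operators, and to generalise the mechanism behind the proof of \autoref{ex-conj} (the product $G(t)=g(t)g(-t)$ and the binomial formula \eqref{bin}) to several variables. First I reduce to a normal form. The recursion is linear in the tuple of coefficients of the $Q_i$ only after expanding, so I treat the coefficients of the $Q_i$ as extra independent variables. Proving the statement over the universal ring is enough, because specialisation preserves membership in $\Z[\frac12][x]$. Write $Q_i(t)=t\,P_i(t^2)$ with $P_i$ having independent integer-indeterminate coefficients. Now put $F(z)=\sum_m\langle\mathcal Q\rangle_m z^m$ and $\theta=\sum_i x_iz_i\partial_{z_i}$, so that $\theta z^m=\langle m,x\rangle z^m$. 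The recursion is then equivalent to the single operator identity
\[
\theta F=\sum_{i=1}^d z_i\,Q_i\big(\theta+\tfrac{x_i}{2}\big)F .
\]
Next I pass to half-variables $y_i$ with $y_i^2=z_i$ and set $D=\sum_i x_iy_i\partial_{y_i}$, so that $\theta=D/2$ on functions of $z$. The shifted term then becomes symmetric: $z_iQ_i(\theta+x_i/2)=y_i\,Q_i(D/2)\,y_i$. Since $Q_i$ is odd, this equals $y_i\,\tfrac{D}{2}P_i(D^2/4)\,y_i$, which is invariant under $y\mapsto -y$ up to the evident sign bookkeeping. This is exactly where oddness enters, and it is the analogue of the symmetry $t\mapsto -t$ of \eqref{D2b}.

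The heart of the argument is to imitate \eqref{id3}. I look for an auxiliary series $g(y)=\sum_n w_n y^n$, with $n\in\Z_{\ge0}^d$, defined by a first-order-in-$y$ recursion of the shape \eqref{w-rec1}:
\[
\langle n,x\rangle\, w_n=\sum_i(\text{polynomial in }\langle n,x\rangle,x)\,w_{n-e_i}+\dots .
\]
Its denominators are products $\prod_{j}\langle n^{(j)},x\rangle$ along a lattice path, a multivariable factorial $[n]!$, and the recursion is built so that $[n]!\,w_n\in\Z[x]$ trivially. I then aim for a deformed Clausen-type identity expressing $F(y^2)$ as $\sum$ (path-binomials) $\times[n]!\,w_n\,y^{2n}$, in the spirit of Remark~\ref{R2}. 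To do this I first guess the identity on the degenerate case where only one $Q_i$ is nonzero. There everything is a product: one gets $\prod_k\frac{(k-\frac12)x_i}{kx_i}P_i\big((k-\tfrac12)^2x_i^2\big)$, and $\prod_k\frac{2k-1}{2k}=\binom{2m}{m}4^{-m}$ already lies in $\Z[\frac12]$. This one-variable computation is also a sanity check that the denominators $2^{-m}$ are genuinely needed. I then verify the multivariable identity by checking that both sides satisfy the same operator equation above, as was done for \eqref{id3}.

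As a fallback, if no closed identity emerges, I argue in two steps. (a) I show that $\langle\mathcal Q\rangle_m$ has no pole along any hyperplane $\langle n,x\rangle=0$. Expanding the recursion into a sum over lattice paths from $0$ to $m$, the terms come in pairs related by the half-shift symmetry above, and I show that the residues cancel by induction on $|m|$. (b) I prove $p$-integrality of the coefficients for each odd prime $p$. For this I specialise $x$ to integer vectors with $\langle n,x\rangle\ne0$ and use a Dwork-type congruence, reducing $\langle\mathcal Q\rangle_{m}$ modulo $p$ to products of $\langle\mathcal Q\rangle$ at $\lfloor m/p\rfloor$ and at $m\bmod p$.

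I expect the main obstacle to be finding the correct multivariable analogue of \eqref{id3}: the right recursion for $w_n$ together with the matching binomial kernel. The residue cancellation in the fallback step (a) is the equivalent difficulty stated directly. Once that identity is written down, its verification and the deduction of $\Z[\frac12]$-integrality should be as routine as in the proof of \autoref{ex-conj}.
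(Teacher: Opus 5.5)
Your set-up is sound: the identity $z_iQ_i(\theta+\frac{x_i}{2})=y_i\,Q_i(\theta)\,y_i$ is exactly the paper's $z_i^{1/2}Q_i(\delta)z_i^{1/2}$. However, the proposal does not contain a proof. The step you call the heart of the argument is a search for a multivariable analogue of \eqref{id3}, and that analogue is never written down. The identity \eqref{id3} relies on structure special to Section~\ref{s1}: \eqref{D3} is the symmetric square of \eqref{D2b}, so $G(t)=g(t)g(-t)$ with $g$ governed by \eqref{w-rec1}, whose leading factor $n$ makes $n!\,w_n\in\Z[b,c]$ automatic. For $d\ge2$ and odd $Q_i$ of arbitrary degree, nothing plays the role of \eqref{D2b}, and you propose neither a recursion for $w_n$ nor a kernel.

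The fallback does not close the gap either. Step (a) merely asserts that residues cancel. Step (b) fails as stated, because $p$-integrality of the values at integer vectors $x$ does not imply $p$-integrality of the coefficients (consider $x_1(x_1-1)(x_1-2)/6$). You also give no mechanism that would produce a Lucas-type congruence for these rational functions.

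Your account of where oddness enters is also off. $y_iQ_i(D/2)y_i$ is invariant under $y\mapsto-y$ for every polynomial $Q_i$, since $D$ and the product of the two factors $y_i$ are both even. Yet for $d=1$ and $Q_1=1$ one gets $\langle\mathcal Q\rangle_m=1/(m!\,x_1^m)$, which is not a polynomial.

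The missing idea is nonlinear and needs no auxiliary series: multiply your operator identity by $F$. This gives $\frac12\theta(F^2)=\sum_i(y_iF)\,Q_i(\theta)\,(y_iF)$. Since $Q_i$ is odd, every term is a total derivative, by the identity $f\,\partial^{2k+1}f=\frac12\partial\big(\sum_{j=0}^{2k}(-1)^j\partial^jf\,\partial^{2k-j}f\big)$, valid for any derivation $\partial$. This skew-symmetry of odd powers of a derivation is where oddness genuinely matters.

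Hence $\theta\big(F^2-\sum_iP_i(y_iF)\big)=0$ with $P_i$ quadratic over $\Z$. Now use $\theta^j(y_iF)=y_i(\theta+\frac{x_i}{2})^jF$ and the fact that $\theta$ annihilates only constants. This integrates to $F^2=1+\sum_iz_i\widetilde P_i(F)$, with $\widetilde P_i$ quadratic in $F,\theta F,\theta^2F,\dots$ over $\Z[\frac12][x]$.

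The coefficient of $z^m$ then expresses $2\langle\mathcal Q\rangle_m$ as a $\Z[\frac12][x]$-polynomial in the $\langle\mathcal Q\rangle_n$ with $n_1+\dots+n_d<m_1+\dots+m_d$, and induction finishes. The divisions by $\langle m,x\rangle$ disappear because one integrates an exact identity rather than dividing term by term.
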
 

\begin{proof}[Proof of \autoref{thmrec}]
	Write $p_i=\sum_{j}a_{ij}Q_{ij}$, for some $a_{ij}\in R$ and $Q_{ij}\in\mathbb{Z}[t]$. Expanding the recursion \eqref{u-new2} we see that $u_n$ is a weighted homogeneous polynomial of degree $n$ in $a_{ij}$ (with weight $i$ assigned to~$a_{ij}$), while the monomial $a_{i_1,j_1}^{n_1}\cdots a_{i_k,j_k}^{n_k}$, for distinct $(i_1,j_1),\dots,(i_k,j_k)$ and $n_1i_1+\dots+n_ki_k=n$, appears with the coefficient
	\[\langle Q_{i_1,j_1},\dots,Q_{i_k,j_k}\rangle_{n_1,\dots,n_k}(i_1,\dots,i_k).\]
	By~\autoref{thmpoly}, this coefficient lies in $R[\frac{1}{2}]$, hence also $u_n\in R[\frac{1}{2}]$.
\end{proof}

\begin{proof}[Proof of \autoref{thmpoly}]
	Let 
	\[F=F(x;z)=\sum_{m_1,\dots,m_d\ge0}\langle\mathcal{Q}\rangle_mz^m \in \mathbb{Q}(x_1,\dots,x_d)[[z_1,\dots,z_d]]\]
	be the generating function of $\langle\mathcal{Q}\rangle_m$. Let $\delta$ be the derivation
$\sum_{i=1}^dx_iz_i\frac{\partial}{\partial z_i}$. The recursive definition of $\langle\mathcal{Q}\rangle_m$ directly implies that 
	\begin{align*}
\delta F = \sum_{m}\langle m,x\rangle  \langle \mathcal{Q}\rangle_{m} z^m 
&= \sum_{i=1}^{d}z_i\sum_{m} \langle\mathcal{Q}\rangle_{m-e_i}Q_i(\langle m-e_i/2,x\rangle)z^{m-e_i} \\
&= \sum_{i=1}^{d}z_iQ_i(\delta+x_i/2)\,F 
= \sum_{i=1}^{d}z_i^{1/2}Q_i(\delta)z_i^{1/2}F.
\end{align*}
	  Multiplying both sides by $F$ we see that
	  \[\frac12\delta(F^2) = \sum_{i=1}^{d}(z_i^{1/2}F)Q_i(\delta)(z_i^{1/2}F).\]
	  Since $Q_i$ are odd polynomials and in view of the identity 
	  \[f\cdot \partial^{2k+1}f = \frac12\,\partial\bigg(\sum_{j=0}^{2k}(-1)^j(\partial^jf)\,(\partial^{2k-j}f)\bigg)\]
	  for any derivation $\partial$, for some quadratic polynomials $P_i(f)\in \Z[f,\partial f,\partial^2f,\dots]$ the following equality holds:
	  \[\delta\bigg(F^2 - \sum_{i=1}^{d}P_i(z_i^{1/2}F)\bigg)=0.\]
	  Equivalently, for some other quadratic polynomials $\widetilde P_i=\widetilde P_i(F)$ in $F,\delta F,\delta^2F,\dots$ with $\Z[\frac12][x]$ coefficients we have
	  \[\delta\bigg(F^2 - \sum_{i=1}^{d}z_i\widetilde P_i(F)\bigg)=0.\]
Integrating the latter we get that $F$ satisfies
	  \[F^2 = 1 + \sum_{i=1}^{d}z_i\widetilde P_i(F).\]
	  The claim is now evident by induction on $m_1+\dots+m_d$: the coefficient of $z^m$ on the left-hand side is
\[
2\langle\mathcal{Q}\rangle_{m}+\sum_{\substack{m'+m''=m\\m',m''\in\Z_{\ge0}^d\smallsetminus \{0\}}}\langle\mathcal{Q}\rangle_{m'}\langle\mathcal{Q}\rangle_{m''},
\]
while the right hand side is a $\Z[\frac12][x]$-polynomial expression in $\langle\mathcal{Q}\rangle_{n}$ with $n_i\le m_i$ and $n_1+\dots+n_d<m_1+\dots+m_d$.
\end{proof}

\begin{remark}
\label{R3}
Observe that when all $Q_i=t$ for $i=1,\dots,d$, we obtain $F^2=1+\sum_{i=1}^{d}z_iF^2$ and the generating function becomes explicit: $F=(1-z_1-\dots-z_d)^{-1/2}$.
Furthermore, note that the power of $2$ in the denominator of $\langle \mathcal{Q}\rangle_m$ grows at most linearly in $m_1+\dots+m_d$.
\end{remark}

\begin{remark}
\label{R4}
In the case of the recursion
\[
n^ka_n=\sum_{i=1}^nq_i(n-i/2)\,a_{n-i} \quad\text{for}\; n=1,2,\dots, \quad a_0=1,
\]
with $k>0$ odd and $q_i(t)\in t\mathbb Z[t^2]$, the substitution $a_n=u_n/n!^{k-1}$ and application of \autoref{thmrec} imply the $\mathbb Z[\frac12]$-integrality of $n!^{k-1}a_n$.
In particular, a sequence $a_n$ generated by the recursion
\[
n^3a_n=(2n-1)(An^2-An+B)a_{n-1}-C(n-1)^3a_{n-2} \quad\text{for}\; n>0,
\quad a_0=1, \; a_{-1}=0
\]
(compare with our motivating example from the introduction), where $A,B,C\in\mathbb Z$, satisfies $2^nn!^2a_n\in\mathbb Z$ rather than $n!^3a_n\in\mathbb Z$.
\end{remark}

\begin{remark}
\label{R5}
Regarding the timeline of this project, it started at the MPIM Bonn with discussions about the (somewhat unusual) integrality observed in \cite[Conjecture 2]{BORS25} and the proof, given now in Section~\ref{s1}, followed shortly after\,---\,all in May 2025. Then the second author found a general integrality conjecture, proving which required a considerable effort and a completely different set of tools. While finalising our project we learned about the preprint \cite{ND26} of Nguyen-Dang who independently worked on \cite[Conjecture 2]{BORS25}.
\end{remark}

\begin{acknowledgements}
We are thankful to Don Zagier for suggesting to state \autoref{thmrec} in its current form and for numerous comments on this note.
We thank the Max Planck Institute for Mathematics (Bonn, Germany) for warm hospitality extended to the authors in May--June 2025.

FF was funded by a DOC Fellowship (27150) of the Austrian Academy of Sciences at the University of Vienna; he further acknowledges financial support from the French--Austrian project EAGLES (ANR-22-CE91-0007 \& FWF grant 10.55776/I6130).
DR acknowledges funding by the European Union (ERC, FourIntExP, 101078782). WZ acknowledges support in part from the NWO grant OCENW.M.24.112.
\end{acknowledgements}

\end{document}